\documentclass[11pt,reqno]{amsart}
\usepackage[margin=1in]{geometry}
\usepackage[T1]{fontenc}
\usepackage{lmodern}
\usepackage{amsmath,amssymb,amsthm,mathtools}
\usepackage[hidelinks]{hyperref}

\numberwithin{equation}{section}

\newtheorem{theorem}{Theorem}[section]
\newtheorem{proposition}[theorem]{Proposition}
\newtheorem{lemma}[theorem]{Lemma}
\newtheorem{corollary}[theorem]{Corollary}
\theoremstyle{remark}
\newtheorem{remark}[theorem]{Remark}

\newcommand{\F}{\mathbb F}
\newcommand{\Sn}{\mathfrak S_n}
\newcommand{\Hom}{\operatorname{Hom}}

\newcommand{\JSp}{\operatorname{Sp}}
\newcommand{\RS}{\mathsf S}
\newcommand{\CS}{\mathsf S}

\title[Specht Modules With Trivial Direct Summands]{Specht Modules With Trivial Direct Summands}
\author{David J. Hemmer}
\address{Department of Mathematical Sciences\\
  Michigan Technological University\\
  Houghton, MI 49931}
\email{djhemmer@mtu.edu}
\date{August 13, 2026}

\begin{document}

\begin{abstract}
We prove that, in characteristic $2$, a Specht module indexed by a nonhook partition cannot have a trivial direct summand.  The proof uses the KLR grading and a recent graded homomorphism theorem of Hudak.  Together with Murphy's theorem for hooks, this gives the complete classification of trivial direct summands of Specht modules in characteristic $2$, answering a question posed by Collins--Dodge and Dodge--Van Vlack.
\end{abstract}

\maketitle

\section{Introduction}

Let $\lambda\vdash n$, and let $\JSp^\lambda$ denote the Specht module for the symmetric group $\Sn$ as defined in James \cite{James}.  Over a field $\F$ of characteristic $2$, the trivial and sign representations of $\Sn$ are isomorphic, so there is a unique one-dimensional $\F\Sn$-module up to isomorphism.

Murphy \cite{Murphy} classified the hook Specht modules having a one-dimensional direct summand.  Note that in odd characteristic, $Sp^\lambda$ is always indecomposable \cite{James}, so this problem is only interesting in characteristic $2$. 

The decomposition of hook Specht modules in characteristic $2$ was subsequently determined more explicitly by Donkin and Geranios \cite{DonkinGeranios}, who described these modules as direct sums of indecomposable Young modules.  Thus the structure of the hook case is considerably better understood than that of general Specht modules.  Our concern here is the more specific question of when the trivial module can occur as a direct summand.  Theorem~\ref{thm:main} shows that this phenomenon is confined entirely to the hook case.

Collins and Dodge \cite{CollinsDodge} raised the general question of determining when a Specht module has a trivial direct summand.  The spaces
\[
\Hom_{\Sn}(\F,\JSp^\lambda)
\qquad\text{and}\qquad
\Hom_{\Sn}(\JSp^\lambda,\F)
\]
are both at most one-dimensional, and their nonvanishing is completely understood.  Thus the question reduces to determining whether the composition
\[
\F \xrightarrow{i_\lambda} \JSp^\lambda
\xrightarrow{p_\lambda} \F
\]
is nonzero.

Collins and Dodge \cite[Theorem~3.5]{CollinsDodge} gave a combinatorial parity condition for this to happen.  Dodge and Van Vlack \cite{DodgeVanVlack} used it to prove that the Specht modules indexed by partitions $(k,3,3)$ have no one-dimensional summand (despite both maps $i_\lambda$ and $p_\lambda$ being nonzero).  The purpose of this note is to answer the Collins--Dodge question negatively: no nonhook partition can occur.

\begin{theorem}\label{thm:main}
Let $\F$ be a field of characteristic $2$, and let $\lambda\vdash n$ be a nonhook partition.  Then $\JSp^\lambda$ has no one-dimensional direct summand.
\end{theorem}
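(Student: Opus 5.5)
The plan is to use the KLR ($\mathbb Z$-)grading on Specht modules: in characteristic $2$ the group algebra $\F\Sn$ is isomorphic to a cyclotomic KLR algebra $R^\Lambda_n$ (Brundan--Kleshchev), under which $\JSp^\lambda$ acquires a graded lift $\CS^\lambda$ (Brundan--Kleshchev--Wang). The trivial module $\F$ is the Specht module for the one-row partition $(n)$, so it too has a graded lift $\CS^{(n)}$, which is concentrated in one degree. If $\F$ is a direct summand of $\JSp^\lambda$, then the composite $p_\lambda\circ i_\lambda$ is nonzero. By Gordon--Green type uniqueness of gradings for indecomposable graded modules, we can then lift the splitting to the graded category. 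Concretely, the inclusion $i_\lambda$ and projection $p_\lambda$ can be taken homogeneous. So there are homogeneous maps
\[
\CS^{(n)}\langle a\rangle \to \CS^\lambda \to \CS^{(n)}\langle b\rangle
\]
whose composite is a nonzero scalar. Homogeneity then forces $a=b$.

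Next I compute the degree shifts. Hudak's graded homomorphism theorem determines the degree of the (unique up to scalar) homomorphism $\F\to\JSp^\lambda$ when it exists, giving $a$ as an explicit function of $\lambda$. For the projection I would use duality. $\Hom(\JSp^\lambda,\F)\cong\Hom(\F,(\JSp^\lambda)^*)$, and $(\CS^\lambda)^\circledast\cong\CS^{\lambda'}\langle \operatorname{def}\rangle$ up to a sign twist, which is trivial in characteristic $2$, with the shift given by the defect. Applying Hudak's theorem to $\lambda'$ then expresses $b$ in terms of $\lambda'$ and the defect of the block. The key identity to check is that $a=b$ forces $\lambda$ to be a hook. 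I expect $a-b$ to equal something like a count of removable or addable $2$-nodes (a degree sum over a standard tableau) that vanishes only when $\lambda$ has at most one row of length greater than $1$.

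The main obstacle is this last combinatorial degree comparison. It requires pinning down exactly which tableau Hudak's theorem attaches the homomorphism to, and summing the KLR degrees $\deg(T)$ over the relevant residue sequence. I would first handle the case where both $i_\lambda$ and $p_\lambda$ are nonzero, since otherwise there is nothing to prove. By the known criteria this imposes James-type congruence conditions on the parts of $\lambda$ and $\lambda'$. Under those conditions I would compute $\deg$ of the initial row tableau $T^\lambda$ and of the column tableau $T_{\lambda}$, and show that their difference is a strictly positive sum of contributions from the second row and second column. This sum is zero exactly when $\lambda_2\le1$, which is the hook case. As a sanity check, the $(k,3,3)$ example of Dodge--Van Vlack should fall out as a special case.
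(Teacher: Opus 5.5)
\textbf{There is a genuine gap: the decisive degree computation is only conjectured.}

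Your architecture matches the paper's. A splitting forces the two homogeneous maps to have degrees summing to $0$; Hudak's theorem gives one degree, and the Kleshchev--Mathas--Ram duality with its defect shift gives the other. Your Gordon--Green route to homogeneous splitting maps is fine. The paper instead uses that both ordinary Hom-spaces are one-dimensional, so any splitting maps are automatically homogeneous.

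The proposal stops where the proof's content begins. What you need is Hudak's result in its actual form. Theorem~4.38 concerns $\Hom(\RS^\mu,L)$ with $L=\RS^{(1^n)}$ in degree $0$, i.e.\ maps \emph{out of} a row Specht module, not maps $\F\to\JSp^\lambda$. The nonzero map has degree exactly $-a(\mu)$, where $a(\mu)=\sum_i\lfloor\mu_i/2\rfloor$. Combine this with $\CS_\lambda\cong\RS^{\lambda'}$ and $(\RS^\lambda)^{\circledast}\cong\CS_\lambda\langle-D(\lambda)\rangle$. The sign of this shift matters: the opposite sign would also obstruct Murphy's hook summands. This gives $\deg p_\lambda+\deg i_\lambda=D(\lambda)-a(\lambda)-a(\lambda')$. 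Since $a(\mu)=\deg t^\mu$, $\deg t+\operatorname{codeg}t=\operatorname{def}$, and conjugation interchanges $\deg$ and $\operatorname{codeg}$, this sum equals $\deg t_\lambda-\deg t^\lambda$. So you were aiming at the right quantity, but you never evaluate it.

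The evaluation uses the nonvanishing hypotheses essentially, through the defect. The parity conditions for both $\lambda$ and $\lambda'$ force $h$ and $w=\lambda_1$ to be odd. They also force all rows and columns to be odd if $n$ is odd, and all but the last row and column odd (the last ones even) if $n$ is even. Counting residues then gives $c_0-c_1\in\{0,1\}$, hence $D(\lambda)=c_0-(c_0-c_1)^2=\lfloor n/2\rfloor$. Counting odd parts then gives $a(\lambda)+a(\lambda')-D(\lambda)=\lceil E(\lambda)/2\rceil$. Here $E(\lambda)=n-h-w+1$ is the number of nodes off the first row and first column. It is not a count of removable/addable nodes, nor of second-row/second-column contributions.

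Without these constraints your expected dichotomy is false. The hook $(2,1)$ is a $2$-core with $D=0$, and $\deg t^{(2,1)}-\deg t_{(2,1)}=1-(-1)=2\neq 0$. So the step you flag as the main obstacle is the whole argument separating hooks from nonhooks, and the proposal does not supply it.
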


The proof uses two graded representation-theoretic inputs.  Hudak's Theorem~4.38 \cite{Hudak} determines the graded Hom-space from a graded row Specht module to the one-dimensional trivial module.  We apply Hudak's theorem twice: the application to $\lambda'$ gives the degree of the quotient map from the graded lift of James's Specht module, while the application to $\lambda$, after graded duality, gives the degree of the inclusion map.  The graded row/column duality and conjugation results of Kleshchev--Mathas--Ram \cite{KMR} provide the bridge between Hudak's row Specht modules and the classical James convention, including the required grading shift.

Combining Theorem~\ref{thm:main} with Murphy's hook theorem yields the full classification of Specht modules with trivial direct summands; see Corollary~\ref{cor:classification}. 

\section{Graded Specht modules and the two Hom-spaces}

Let $\RS^\mu$ denote the universal graded \emph{row} Specht module of Kleshchev--Mathas--Ram \cite{KMR}, and let $\CS_\mu$ denote their graded \emph{column} Specht module.  After forgetting the grading, $\CS_\mu$ is the Specht module $\JSp^\mu$ in the convention of James \cite[p.~1246]{KMR}.

Throughout this section, $\Hom_A(M,N)$ denotes the Hom-space for the underlying ungraded $A$-modules.  If $M$ and $N$ are graded, write $\Hom_A^d(M,N)$ for the subspace of homogeneous homomorphisms of degree $d$.  Since all modules occurring here are finite dimensional,
\[
\Hom_A(M,N)=\bigoplus_{d\in\mathbb Z}\Hom_A^d(M,N).
\]
Thus a statement that the graded Hom-space has graded dimension $q^d$ means that the ordinary Hom-space is one-dimensional and is spanned by a homogeneous map of degree $d$.

We work at quantum characteristic $e=2$, level one with multicharge $0$, and $\operatorname{char}\F=2$.  Via the Brundan--Kleshchev identification, the corresponding cyclotomic KLR algebra is identified, as an ungraded algebra, with the group algebra $\F\Sn$; throughout we use the KLR grading transported across this identification (see \cite{KMR}).  Put
\[
L:=\RS^{(1^n)}.
\]
As an ungraded module, $L$ is the unique one-dimensional $\F\Sn$-module.  In the grading used by Hudak \cite[Section~2.2]{Hudak}, it is concentrated in degree $0$.  We deliberately use the sign-labelled graded lift $\RS^{(1^n)}$, rather than $\RS^{(n)}$, because the former is concentrated in degree $0$ in Hudak's grading.

For a partition $\mu=(\mu_1,\ldots,\mu_z)$ set
\[
a(\mu):=\sum_{i=1}^z\left\lfloor\frac{\mu_i}{2}\right\rfloor.
\]
Hudak's result gives a complete set of necessary and sufficient conditions for the nonvanishing of $\Hom(\RS^\mu,L)$, together with its graded dimension; we record below only the parts that we will need:

\begin{proposition}\cite[Theorem~4.38]{Hudak}\label{prop:hudak}
Let $\mu=(\mu_1,\ldots,\mu_z)\vdash n$ and suppose $\operatorname{char}\F=2$.  Then
\[
\Hom(\RS^\mu,L)
\]
is either zero or one-dimensional.  If it is nonzero, then
\[
\Hom^d(\RS^\mu,L)=0\qquad\text{for }d\ne-a(\mu),
\]
and $\Hom^{-a(\mu)}(\RS^\mu,L)$ is one-dimensional.  Moreover:
\begin{align*}
&\mu_i\text{ is odd for every }i<z,\\
&n\text{ odd }\Longrightarrow \mu_z\text{ is odd}.
\end{align*}
\end{proposition}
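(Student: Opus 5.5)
Since the statement is quoted from \cite[Theorem~4.38]{Hudak}, the plan is to read off the listed consequences from the Kleshchev--Mathas--Ram presentation of $\RS^\mu$ that Hudak uses, rather than to reprove the full classification. By \cite{KMR}, $\RS^\mu$ is cyclic, generated by a homogeneous vector $v^\mu$ of degree $\deg(\mathtt t^\mu)$. The defining relations are
\[
e(\mathbf i)v^\mu=\delta_{\mathbf i,\mathbf i^\mu}v^\mu,\qquad y_rv^\mu=0,\qquad \psi_rv^\mu=0 \text{ for } r,r+1 \text{ in the same row of } \mathtt t^\mu,
\]
together with the homogeneous Garnir relations. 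Hence a homomorphism $\RS^\mu\to L$ is determined by the image of $v^\mu$. Since $\dim L=1$, this immediately gives $\dim\Hom(\RS^\mu,L)\le 1$. Moreover, any nonzero such map is homogeneous of degree $\deg L-\deg v^\mu=-\deg(\mathtt t^\mu)$, because $L$ is concentrated in degree $0$.

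Next I would compute $\deg(\mathtt t^\mu)$ at $e=2$. The degree is additive over the placement of nodes, and adding nodes along a single row of length $m$ contributes $\lfloor m/2\rfloor$: every second node picks up an addable node of the same residue. Summing over rows gives $\deg(\mathtt t^\mu)=a(\mu)$. So the unique possible degree is $-a(\mu)$, and a nonzero map spans $\Hom^{-a(\mu)}$.

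For the necessary conditions I would use the idempotent relation. The map is nonzero only if $e(\mathbf i^\mu)$ acts nonzero on $L$. The idempotent $e(\mathbf i)$ acts on $L=\RS^{(1^n)}$ nontrivially exactly for the column residue sequence $(0,1,0,1,\dots)$. Reading $\mathtt t^\mu$ row by row, row $i$ has residues $j-i \pmod 2$ for $j=1,\dots,\mu_i$. Row $i$ thus ends in residue $\mu_i-i$, and row $i+1$ begins in residue $-i$. Alternation across the junction forces $\mu_i-i\not\equiv -i$, that is, $\mu_i$ odd for every $i<z$.

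The remaining implication, $n$ odd $\Rightarrow$ $\mu_z$ odd, does not follow from residues alone, since the last row imposes no junction condition. I expect this to be the main obstacle. My plan is to check the relations $\psi_r v^\mu=0$ within the last row against the action of $\psi_r$ on $L$, together with the last-row Garnir relation, which is where Hudak's parity analysis enters. If this becomes intricate, I would simply cite it from \cite[Theorem~4.38]{Hudak}, as the proposition does.
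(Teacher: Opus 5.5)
Your arguments for the first three assertions are correct. They give a self-contained derivation where the paper only cites Hudak:
\begin{itemize}
\item cyclicity of $\RS^\mu$ on the homogeneous generator $v^\mu$ gives $\dim\Hom(\RS^\mu,L)\le 1$, and forces any nonzero map to have degree $-\deg\mathtt t^\mu$;
\item when $(i,j)$ is added in $\mathtt t^\mu$, the only addable node below it is $(i+1,1)$, which has the same residue exactly when $j$ is even, so $\deg\mathtt t^\mu=a(\mu)$;
\item comparing $\mathbf i^\mu$ with $(0,1,0,1,\dots)$ gives $\mu_i$ odd for $i<z$.
\end{itemize}

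The genuine gap is the final clause, and the mechanism you propose cannot supply it. On $L$ every $y_r$ acts as $0$, since it has degree $2$. Every $\psi_r$ also acts as $0$: for $\mathbf j=(0,1,0,1,\dots)$, whose adjacent entries differ, $\psi_rL=\psi_re(\mathbf j)L\subseteq e(s_r\mathbf j)L=0$. Hence the relations $\psi_rv^\mu=0$, in the last row or anywhere else, impose nothing. The only possible obstructions are the Garnir relations. These sit at nodes $A=(a,b)$ with $(a+1,b)\in\mu$ and tie $\mu_a$ to $\mu_{a+1}$; nothing produces a parity condition on $\mu_z$.

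In fact the clause is false as stated. Take $\mu=(3,2)$. By \eqref{eq:row-dual-column} and $L^{\circledast}\cong L$,
$\Hom(\RS^{(3,2)},L)\cong\Hom(L,\CS_{(3,2)})=\Hom_{\F\mathfrak S_5}(\F,\JSp^{(3,2)})$.
The sum of all $(3,2)$-tabloids lies in $\JSp^{(3,2)}$ by James's kernel intersection theorem. Its images in $M^{(5)}$ and $M^{(4,1)}$ are $10$ times and $4$ times a sum of tabloids, hence zero in characteristic $2$. This agrees with James's criterion that $\mu_a+1$ be divisible by the least power of $2$ exceeding $\mu_{a+1}$; here $4$ divides $3+1$.

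So this Hom-space is nonzero although $n=5$ is odd and $\mu_z=2$ is even. No argument, and no appeal to Hudak, can establish the clause as quoted.

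The clause is also unnecessary for the paper. For a nonhook with both Hom-spaces nonzero, $h$ is odd and $\lambda_1,\dots,\lambda_{h-1}$ are odd, so $\lambda_h\equiv n\pmod 2$. The same holds for columns, exactly as in the even case of Lemma~\ref{lem:parity-pattern}.
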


\begin{remark}\label{rem:hudak-extra}
Hudak's Theorem~4.38 imposes a further congruence condition for nonvanishing, which we do not need in what follows.
\end{remark}
Define
\[
D(\lambda):=\operatorname{def}(\operatorname{cont}(\lambda)),
\]
where $\operatorname{cont}(\lambda)$ and $\operatorname{def}$ are as in
\cite[(2.5), (2.7)]{KMR}. We use the grading-shift convention of \cite{KMR}:
\[
(M\langle d\rangle)_r=M_{r-d}.
\]
Thus $M\langle d\rangle$ is obtained by shifting the grading on $M$
up by $d$.

James proved \cite[Theorem 8.15]{James} that:
$$Sp^\lambda \otimes \operatorname{sgn} \cong (Sp^{\lambda'})^*.$$ Kleschev--Mathas--Ram gave a graded version that we will need: 

\begin{lemma}[Kleshchev--Mathas--Ram]\label{lem:duality}
For $e=2$ and $\operatorname{char}\F=2$ there are graded isomorphisms
\begin{align}
\CS_\lambda
&\cong \RS^{\lambda'},
\label{eq:column-row}\\
(\RS^\lambda)^{\circledast}
&\cong \CS_\lambda\langle-D(\lambda)\rangle.
\label{eq:row-dual-column}
\end{align}
\end{lemma}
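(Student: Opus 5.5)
The plan is to obtain both isomorphisms by specializing the general results of \cite{KMR} to level one, multicharge $0$, $e=2$ and $\operatorname{char}\F=2$, and checking that every twist and sign in their statements becomes trivial in this setting.

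For \eqref{eq:column-row}, I will start from the conjugation result of \cite{KMR}. It identifies the column Specht module $\CS_\lambda$ with the row Specht module of the conjugate partition, but twisted by the sign automorphism $\operatorname{sgn}$ of the cyclotomic KLR algebra. This automorphism sends
\[
e(\mathbf i)\mapsto e(-\mathbf i),\qquad y_r\mapsto -y_r,\qquad \psi_r\mapsto-\psi_r .
\]
It also replaces the multicharge $\kappa$ by $-\kappa$. I will check three points.
\begin{itemize}
\item At $e=2$ we have $-i\equiv i \pmod 2$ for every residue, so $\operatorname{sgn}$ fixes each idempotent $e(\mathbf i)$.
\item The multicharge $0$ is fixed, so the target algebra is the same cyclotomic quotient.
\item In characteristic $2$ the signs on $y_r$ and $\psi_r$ are trivial.
\end{itemize}
Together these show that $\operatorname{sgn}$ is the identity automorphism. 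I will also confirm that it preserves degrees, which it does since it is homogeneous. The twisted module is therefore just $\RS^{\lambda'}$ as a graded module, which gives \eqref{eq:column-row}. A useful sanity check is the ungraded version: it should recover $\JSp^\lambda\otimes\operatorname{sgn}\cong(\JSp^{\lambda'})^*$ from \cite[Theorem~8.15]{James} combined with the ungraded form of \eqref{eq:row-dual-column}. This holds because $\operatorname{sgn}$ is trivial in characteristic $2$.

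For \eqref{eq:row-dual-column}, I will invoke the graded duality theorem of \cite{KMR}. It states that the graded dual (with respect to the anti-involution fixing the generators) of a row Specht module is the column Specht module of the same (multi)partition, shifted by the defect of the corresponding block. In level one, the relevant defect is $\operatorname{def}(\operatorname{cont}(\lambda))=D(\lambda)$ by \cite[(2.5), (2.7)]{KMR}. No extra twist appears here, because this theorem does not change the residue sequence.

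The main obstacle is bookkeeping of conventions rather than mathematics. I must confirm that the shift carries the sign $-D(\lambda)$ under the convention $(M\langle d\rangle)_r=M_{r-d}$ rather than $+D(\lambda)$. I will also need to confirm that KMR's labelling of "row" versus "column" (which is transposed relative to some other sources) matches the identification of $\CS_\mu$ with James's $\JSp^\mu$ \cite[p.~1246]{KMR}. To pin down the sign, I would test a small case. Take $\lambda=(1^n)$ or a one-dimensional block of defect $0$, where both sides are concentrated in a single degree. Then compare the degree of the generating vector: the dual of $\RS^\lambda$ has the negated top degree, and the shift needed to match $\CS_\lambda$ is forced. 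Once this case is consistent, the general statement follows directly from the cited theorem.
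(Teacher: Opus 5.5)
Your proposal is correct and follows the paper's proof. You read off the first isomorphism from KMR's conjugation theorem, noting that the sign twist is trivial because $e=2$, $\operatorname{char}\F=2$ and the multicharge $0$ is self-conjugate. You obtain the second from KMR's duality theorem $\CS_\lambda\cong(\RS^\lambda)^{\circledast}\langle D(\lambda)\rangle$, which rearranges to the stated form.

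One small correction to your optional sign check. A defect-$0$ block has $D(\lambda)=0$, so it cannot detect the sign of the shift. The case $\lambda=(1^n)$ does detect it: $\RS^{(1^n)}$ sits in degree $0$, while $\CS_{(1^n)}\cong\RS^{(n)}$ sits in degree $\lfloor n/2\rfloor=D((1^n))$, which confirms the shift $\langle -D(\lambda)\rangle$.
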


\begin{proof}
The first isomorphism follows from
\cite[Theorem~8.5]{KMR}.  In our level-one setting the multicharge is $0$, and its conjugate multicharge is again $0$.  Thus Kleshchev--Mathas--Ram give
\[
\CS_\lambda\cong(\RS^{\lambda'})^{\mathrm{sgn}}.
\]
The sign automorphism is defined in \cite[(3.14)]{KMR}.  When $e=2$,
the residue involution $i\mapsto-i$ is trivial, and in characteristic
$2$ the minus signs on the generators $y_r$ and $\psi_s$ disappear.
Hence the sign twist is trivial in our setting.

For the second isomorphism,
\cite[Theorem~7.25]{KMR} gives
\[
\CS_\lambda
\cong
(\RS^\lambda)^{\circledast}\langle D(\lambda)\rangle,
\]
which is equivalent to
\[
(\RS^\lambda)^{\circledast}
\cong
\CS_\lambda\langle-D(\lambda)\rangle.
\]
\end{proof}
We can now identify separately the relevant inclusion and quotient module homomorphisms.  The quotient map comes directly from Hudak's theorem applied to $\lambda'$, while the inclusion map comes from dualizing Hudak's theorem applied to $\lambda$.
\begin{proposition}\label{prop:two-degrees}
Suppose both Hom-spaces of underlying ungraded modules are nonzero:
\begin{equation}\label{eq:two-hom-spaces}
\Hom(\CS_\lambda,L)\ne0,
\qquad
\Hom(L,\CS_\lambda)\ne0.
\end{equation}
Then each is one-dimensional.  They are spanned by homogeneous maps
\[
p_\lambda:\CS_\lambda\longrightarrow L,
\qquad
i_\lambda:L\longrightarrow\CS_\lambda
\]
with
\begin{equation}\label{eq:two-map-degrees}
\deg p_\lambda=-a(\lambda'),
\qquad
\deg i_\lambda=D(\lambda)-a(\lambda).
\end{equation}
Furthermore, the nonvanishing in \eqref{eq:two-hom-spaces}
implies the parity conditions of Proposition~\ref{prop:hudak}
for both $\lambda$ and $\lambda'$.
\end{proposition}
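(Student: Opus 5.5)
For $p_\lambda$ we apply Proposition~\ref{prop:hudak} to $\lambda'$ through the first isomorphism of Lemma~\ref{lem:duality}. By \eqref{eq:column-row}, $\CS_\lambda\cong\RS^{\lambda'}$ as graded modules, so
\[
\Hom^d(\CS_\lambda,L)\cong\Hom^d(\RS^{\lambda'},L)\qquad\text{for every }d.
\]
If $\Hom(\CS_\lambda,L)\neq 0$, Proposition~\ref{prop:hudak} applied to $\mu=\lambda'$ shows that this space is one-dimensional and concentrated in degree $-a(\lambda')$. It also shows that the parity conditions hold for $\lambda'$. This gives $p_\lambda$ and its degree.

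For $i_\lambda$ we pass to graded duals. The graded dual $\circledast$ gives a degree-preserving identification
\[
\Hom^d(M,N)\cong\Hom^d(N^{\circledast},M^{\circledast}),
\]
since a homogeneous map of degree $d$ dualizes to one of degree $d$ with the KMR convention. We also need $L^{\circledast}\cong L$. This holds because $L$ is one-dimensional and concentrated in degree $0$, and $\circledast$ negates degrees. From \eqref{eq:row-dual-column} we get
\[
\Hom^d(L,\CS_\lambda)\cong\Hom^d\bigl(L,(\RS^\lambda)^{\circledast}\langle D(\lambda)\rangle\bigr).
\]
Shifting the target up by $D(\lambda)$ shifts the degrees of maps up by $D(\lambda)$. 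Hence the right-hand side equals
\[
\Hom^{d-D(\lambda)}\bigl(L,(\RS^\lambda)^{\circledast}\bigr)\cong\Hom^{d-D(\lambda)}(\RS^\lambda,L^{\circledast})=\Hom^{d-D(\lambda)}(\RS^\lambda,L).
\]
Now Proposition~\ref{prop:hudak} for $\mu=\lambda$ gives three things: the space is one-dimensional, it is nonzero only when $d-D(\lambda)=-a(\lambda)$, i.e.\ $d=D(\lambda)-a(\lambda)$, and the parity conditions hold for $\lambda$. In the ungraded setting these graded pieces sum to $\Hom(L,\CS_\lambda)$. So nonvanishing of the ungraded space forces nonvanishing of $\Hom(\RS^\lambda,L)$, and the hypothesis of Proposition~\ref{prop:hudak} is met.

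The main obstacle is keeping the sign conventions straight, and this is the step I would check first. The points to verify against \cite{KMR} are:
\begin{itemize}
\item the shift convention $(M\langle d\rangle)_r=M_{r-d}$, which gives $\Hom^d(M,N\langle k\rangle)=\Hom^{d-k}(M,N)$;
\item that $\circledast$ reverses degrees, so that duality preserves the degree of maps;
\item that the involution used to define $\circledast$ fixes the generators, so that $L^{\circledast}\cong L$ with no shift.
\end{itemize}
A sign slip in any of these would change $\deg i_\lambda$ to $-D(\lambda)+a(\lambda)$ or $a(\lambda)-D(\lambda)$. That is exactly the quantity compared against $\deg p_\lambda$ later in the paper.
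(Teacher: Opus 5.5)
Your proof is correct and follows the paper's route. For $p_\lambda$ you use the isomorphism $\CS_\lambda\cong\RS^{\lambda'}$ together with Hudak's theorem for $\lambda'$. For $i_\lambda$ you use $\CS_\lambda\cong(\RS^\lambda)^{\circledast}\langle D(\lambda)\rangle$, graded duality, $L^{\circledast}\cong L$, and Hudak's theorem for $\lambda$. The only difference is presentation: you record the degree-wise isomorphism $\Hom^d(L,\CS_\lambda)\cong\Hom^{d-D(\lambda)}(\RS^\lambda,L)$ for every $d$, whereas the paper dualizes the specific map $q_\lambda$ and then converts its degree through the same shift bookkeeping.
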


\begin{proof}
By \eqref{eq:column-row},
\[
\CS_\lambda\cong\RS^{\lambda'}.
\]
Hence
\[
\Hom(\CS_\lambda,L)
\cong
\Hom(\RS^{\lambda'},L).
\]
Proposition~\ref{prop:hudak}, applied to $\lambda'$, shows that this
ordinary Hom-space is one-dimensional when nonzero and is spanned by
a homogeneous map of degree $-a(\lambda')$.  It also gives the required
parity conditions for $\lambda'$.

For the inclusion, graded duality gives, after forgetting grading
shifts,
\[
\Hom(L,\CS_\lambda)
\cong
\Hom(\CS_\lambda^{\circledast},L^{\circledast}).
\]
Since $L$ is concentrated in degree $0$, we have
$L^{\circledast}\cong L$.  Dualizing \eqref{eq:row-dual-column} gives
\[
\CS_\lambda^{\circledast}\cong \RS^\lambda\langle-D(\lambda)\rangle,
\]
so, after forgetting the grading shift, nonvanishing of
$\Hom(L,\CS_\lambda)$ is equivalent to nonvanishing of
\[
\Hom(\RS^\lambda,L).
\]
Proposition~\ref{prop:hudak}, applied to $\lambda$, therefore shows
that $\Hom(L,\CS_\lambda)$ is one-dimensional and gives the required
parity conditions for $\lambda$.

To determine its degree explicitly, let
\[
q_\lambda:\RS^\lambda\longrightarrow L
\]
be a nonzero homomorphism.  By Proposition~\ref{prop:hudak},
$q_\lambda$ is homogeneous of degree $-a(\lambda)$.  Dualizing gives
a homogeneous map of the same degree
\[
L\longrightarrow(\RS^\lambda)^{\circledast}.
\]
Using \eqref{eq:row-dual-column}, its target may be identified with
\[
\CS_\lambda\langle-D(\lambda)\rangle.
\]
Thus this map has degree $-a(\lambda)$ with target
$\CS_\lambda\langle-D(\lambda)\rangle$.  Recall our grading-shift
convention $(M\langle d\rangle)_r=M_{r-d}$: a homogeneous map of
degree $d_0$ into $\CS_\lambda\langle-D(\lambda)\rangle$ is, as a map
into the unshifted module $\CS_\lambda$, homogeneous of degree
$d_0+D(\lambda)$.  With $d_0=-a(\lambda)$ this gives degree
\[
D(\lambda)-a(\lambda).
\]
This is $i_\lambda$, and proves the second formula in
\eqref{eq:two-map-degrees}.
\end{proof}

\section{The parity and defect calculation}

Assume from now on that $\lambda=(\lambda_1,\ldots,\lambda_h)\vdash n$ is a nonhook and that both Hom-spaces in \eqref{eq:two-hom-spaces} are nonzero.  Put $w=\lambda_1.$
Because $\lambda$ is a nonhook, it certainly has multiple rows and columns, so $h,w\ge2$.

\begin{lemma}\label{lem:parity-pattern}
Under these assumptions, $h$ and $w$ are odd.  When $n$ is odd, all
row lengths and column heights of $\lambda$ are odd.  When $n$ is even,
all but the last row and column are odd, while the last row length and
last column height are even.
\end{lemma}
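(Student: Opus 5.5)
The plan is to read everything off the parity conditions of Proposition~\ref{prop:hudak}. By Proposition~\ref{prop:two-degrees}, these hold for both $\lambda$ and $\lambda'$. I take $z$ to be the number of nonzero parts in each case, so $z=h$ for $\lambda=(\lambda_1,\ldots,\lambda_h)$ and $z=w$ for $\lambda'=(\lambda'_1,\ldots,\lambda'_w)$, where $\lambda'_1=h$ and $\lambda_1=w$.

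For $\lambda$, the conditions say that $\lambda_i$ is odd for every $i<h$, and that $\lambda_h$ is odd if $n$ is odd. For $\lambda'$, they say that $\lambda'_j$ is odd for every $j<w$, and that $\lambda'_w$ is odd if $n$ is odd.

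\emph{Oddness of $h$ and $w$.} Since $\lambda$ is a nonhook we have $h\ge2$. So row $1$ is not the last row, and $w=\lambda_1$ is odd by the row condition. Likewise $w\ge2$, so column $1$ is not the last column, and $h=\lambda'_1$ is odd by the column condition.

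\emph{Case $n$ odd.} The statement follows immediately: the second parity condition makes the last row length $\lambda_h$ odd, and applied to $\lambda'$ (also a partition of $n$) it makes the last column height $\lambda'_w$ odd. All other rows and columns are already odd.

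\emph{Case $n$ even.} I use a parity count. We have $n=\sum_{i<h}\lambda_i+\lambda_h$, and the first sum consists of $h-1$ odd numbers. Since $h$ is odd, $h-1$ is even, so that sum is even, and hence $\lambda_h\equiv n\equiv 0\pmod 2$. The identical argument with $n=\sum_{j<w}\lambda'_j+\lambda'_w$, using that $w$ is odd, shows $\lambda'_w$ is even.

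The only mildly delicate point is the order of the argument: the oddness of $h$ and $w$ must be established first, because it is what drives the parity count for the last row and column. Establishing it in turn uses the nonhook hypothesis, through $h,w\ge2$, to guarantee that the first row and first column are not the last ones.
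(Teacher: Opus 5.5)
Your proposal is correct and follows essentially the same route as the paper. Both arguments apply Hudak's parity conditions to $\lambda$ and $\lambda'$ and use $h,w\ge2$ to make $w=\lambda_1$ and $h=\lambda'_1$ odd. Both then count parities over the first $h-1$ rows (resp.\ $w-1$ columns) to force the last row (resp.\ column) to be even when $n$ is even.
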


\begin{proof}
By Proposition~\ref{prop:two-degrees}, Hudak's necessary parity conditions hold for $\lambda$.  Hence every row except possibly the last has odd length.  Since there are multiple rows, $w=\lambda_1$ is odd.

The same argument applied to $\lambda'$ says that every column except possibly the last has odd height and since there are multiple columns,  $h$ is also odd.

If $n$ is odd, Proposition~\ref{prop:hudak} also says that the last part of $\lambda$ is odd and the last part of $\lambda'$ is odd.  Thus every row and every column is odd.

Suppose $n$ is even. There are $h-1$ rows outside the last row.  Since $h$ is odd, $h-1$ is even, and their odd lengths have even total.  Therefore the last row has even length.  The column statement follows identically because $w$ is odd.
\end{proof}

Let $c_0$ and $c_1$ denote the numbers of nodes of residues $0$ and
$1$ in $\lambda$.  With multicharge $0$, the residue of a node $(i,j)$ is
\[
\operatorname{res}(i,j)=j-i\pmod 2.
\]
From the definitions in
\cite[(2.1), (2.4), (2.5), (2.7)]{KMR}, in our level-one $e=2$
setting we have
\[
\operatorname{cont}(\lambda)=c_0\alpha_0+c_1\alpha_1,
\]
with
\[
(\Lambda_0,\alpha_i)=\delta_{0i},
\qquad
(\alpha_0,\alpha_0)=(\alpha_1,\alpha_1)=2,
\qquad
(\alpha_0,\alpha_1)=(\alpha_1,\alpha_0)=-2.
\]
Hence
\[
(\Lambda_0,\operatorname{cont}(\lambda))=c_0
\]
and
\[
\frac12
\bigl(\operatorname{cont}(\lambda),
      \operatorname{cont}(\lambda)\bigr)
=(c_0-c_1)^2.
\]
Therefore
\begin{equation}\label{eq:defect-residue}
D(\lambda)=c_0-(c_0-c_1)^2.
\end{equation}
\begin{lemma}\label{lem:defect}
Under the assumptions above,
\[
D(\lambda)=\left\lfloor\frac n2\right\rfloor.
\]
\end{lemma}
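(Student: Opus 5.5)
The plan is to compute $c_0-c_1$ directly from the row lengths of $\lambda$, using only the row-parity information in Lemma~\ref{lem:parity-pattern}, and then substitute into \eqref{eq:defect-residue}. Since $c_0+c_1=n$, knowing $c_0-c_1$ determines $c_0$, and hence $D(\lambda)$.

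\emph{Step 1: the contribution of one row.} Consider row $i$ of $\lambda$, which has length $\lambda_i$. Its nodes $(i,1),\dots,(i,\lambda_i)$ have residues $j-i \pmod 2$ for $j=1,\dots,\lambda_i$, and these alternate starting from $1-i \pmod 2$.
\begin{itemize}
\item If $\lambda_i$ is even, the row has equally many nodes of residue $0$ and $1$, so it contributes $0$ to $c_0-c_1$.
\item If $\lambda_i$ is odd, the row has one extra node of its starting residue $1-i$. This residue is $0$ exactly when $i$ is odd, so the row contributes $(-1)^{i+1}$.
\end{itemize}
Summing over rows gives
\[
c_0-c_1=\sum_{i\,:\,\lambda_i\text{ odd}}(-1)^{i+1}.
\]

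\emph{Step 2: apply Lemma~\ref{lem:parity-pattern}.} Recall that $h$ is odd in both cases.
\begin{itemize}
\item If $n$ is odd, every row is odd. The sum is the alternating sum $1-1+\cdots+1$ of $h$ terms, which equals $1$. Then $c_0=(n+1)/2$, and \eqref{eq:defect-residue} gives
\[
D(\lambda)=\tfrac{n+1}{2}-1=\tfrac{n-1}{2}=\lfloor n/2\rfloor .
\]
\item If $n$ is even, rows $1,\dots,h-1$ are odd and row $h$ is even. The sum has $h-1$ alternating terms, an even number, so it equals $0$. Then $c_0=c_1=n/2$, and $D(\lambda)=n/2$.
\end{itemize}

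There is no serious obstacle here. The only care needed is in the sign bookkeeping of Step 1, namely that the extra node of an odd row has residue $0$ precisely for odd $i$. Note also that the column conditions of Lemma~\ref{lem:parity-pattern} are not needed for this computation.
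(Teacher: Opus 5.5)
Your proposal is correct and is essentially the paper's own proof: the same row-by-row count (even rows contribute $0$, an odd row $i$ contributes $(-1)^{i+1}$ to $c_0-c_1$), followed by the same two-case alternating sum via Lemma~\ref{lem:parity-pattern} and \eqref{eq:defect-residue}. One minor point: your closing claim that the column conditions are not needed is true, but your argument as written does use them, because the oddness of $h$ (and hence the evenness of row $h$ when $n$ is even) comes from the $\lambda'$ half of Lemma~\ref{lem:parity-pattern}. To justify the claim, note that $h\equiv n\pmod 2$ when all rows are odd, and that for $n$ even the odd rows form an initial segment of even length whichever parity $h$ has.
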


\begin{proof}
An even length row has equally many nodes of residues zero and one.  An odd row contributes $+1$ to $c_0-c_1$ when its row number is odd and $-1$ when its row number is even.

If $n$ is odd, Lemma~\ref{lem:parity-pattern} says that all $h$ rows are odd, and $h$ is odd.  Their alternating contributions give
\[
c_0-c_1=1,
\qquad
c_0=\frac{n+1}{2}.
\]
Equation~\eqref{eq:defect-residue} yields $D(\lambda)=(n-1)/2$.

If $n$ is even, the first $h-1$ rows are odd and the last row is even.  Since $h-1$ is even, the alternating contributions cancel, so $c_0=c_1=n/2$.  Equation~\eqref{eq:defect-residue} gives $D(\lambda)=n/2$.
\end{proof}

Define
\[
E(\lambda):=n-w-h+1.
\]
This is exactly the number of nodes $(i,j)$ of the Young diagram outside the first row and the first column.  Hence
\[
E(\lambda)=0\quad\Longleftrightarrow\quad\lambda\text{ is a hook}.
\]

\begin{proposition}\label{prop:degree-formula}
If $\lambda$ is a nonhook and both Hom-spaces in \eqref{eq:two-hom-spaces} are nonzero, then
\[
a(\lambda)+a(\lambda')-D(\lambda)
=
\left\lceil\frac{E(\lambda)}2\right\rceil.
\]
Consequently
\[
\deg p_\lambda+\deg i_\lambda
=
-\left\lceil\frac{E(\lambda)}2\right\rceil<0.
\]
\end{proposition}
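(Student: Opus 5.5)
The plan is a direct computation. By Lemma~\ref{lem:parity-pattern} we know the parity of every row length and every column height, so $a(\lambda)$ and $a(\lambda')$ can be written in closed form. Lemma~\ref{lem:defect} gives $D(\lambda)$. We then compare the result with $E(\lambda)=n-w-h+1$, splitting into the cases $n$ odd and $n$ even.

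The basic fact is that $\lfloor m/2\rfloor=(m-1)/2$ for $m$ odd and $m/2$ for $m$ even. Hence $a(\mu)=\tfrac12\bigl(|\mu|-\#\{\text{odd parts of }\mu\}\bigr)$. We also use that the number of rows is $h$, the number of columns is $w$, and $|\lambda|=|\lambda'|=n$.

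\emph{Case $n$ odd.} By Lemma~\ref{lem:parity-pattern} all $h$ rows and all $w$ columns are odd. So $a(\lambda)=(n-h)/2$ and $a(\lambda')=(n-w)/2$. By Lemma~\ref{lem:defect}, $D(\lambda)=(n-1)/2$. Therefore
\[
a(\lambda)+a(\lambda')-D(\lambda)=\tfrac12(n-h-w+1)=\tfrac12E(\lambda).
\]
Since $n,h,w$ are all odd, $E(\lambda)$ is even, so this equals $\lceil E(\lambda)/2\rceil$.

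\emph{Case $n$ even.} Exactly $h-1$ rows and $w-1$ columns are odd. So $a(\lambda)=(n-h+1)/2$ and $a(\lambda')=(n-w+1)/2$, and $D(\lambda)=n/2$. Therefore
\[
a(\lambda)+a(\lambda')-D(\lambda)=\tfrac12(n-h-w+2)=\tfrac12\bigl(E(\lambda)+1\bigr).
\]
Here $n$ is even and $h,w$ are odd, so $E(\lambda)$ is odd, and this equals $\lceil E(\lambda)/2\rceil$.

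For the consequence, \eqref{eq:two-map-degrees} gives
\[
\deg p_\lambda+\deg i_\lambda=-a(\lambda')+D(\lambda)-a(\lambda)=-\lceil E(\lambda)/2\rceil.
\]
Since $\lambda$ is a nonhook, $E(\lambda)\ge1$, so this sum is strictly negative.

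There is no real obstacle. The only point needing care is the parity bookkeeping: in each case $E(\lambda)$ must have the parity that makes the ceiling match. This follows from the parities of $n,h,w$ established in Lemma~\ref{lem:parity-pattern}.
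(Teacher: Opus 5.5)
Your proposal is correct and matches the paper's proof essentially step for step. You write $a(\mu)$ as half of $n$ minus the number of odd parts, then insert the parity data from Lemma~\ref{lem:parity-pattern} and the value of $D(\lambda)$ from Lemma~\ref{lem:defect} in the two cases $n$ odd and $n$ even, match the ceiling via the parity of $E(\lambda)$, and get the negative degree sum from \eqref{eq:two-map-degrees} together with $E(\lambda)\ge 1$.
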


\begin{proof}
For any partition,
\[
a(\lambda)=\frac{n-o_r(\lambda)}2,
\qquad
a(\lambda')=\frac{n-o_c(\lambda)}2,
\]
where $o_r(\lambda)$ is the number of odd row lengths and $o_c(\lambda)$ the number of odd column heights.

If $n$ is odd, Lemma~\ref{lem:parity-pattern} gives $o_r=h$ and $o_c=w$.  Using Lemma~\ref{lem:defect},
\begin{align*}
a(\lambda)+a(\lambda')-D(\lambda)
&=n-\frac{h+w}{2}-\frac{n-1}{2}\\
&=\frac{n-h-w+1}{2}\\
&=\frac{E(\lambda)}2.
\end{align*}
Here $E(\lambda)$ is even.

If $n$ is even, $o_r=h-1$ and $o_c=w-1$, so
\begin{align*}
a(\lambda)+a(\lambda')-D(\lambda)
&=n-\frac{h+w-2}{2}-\frac n2\\
&=\frac{n-h-w+2}{2}\\
&=\frac{E(\lambda)+1}{2}.
\end{align*}
Here $E(\lambda)$ is odd.  This proves the first assertion.  The formula for $\deg p_\lambda+\deg i_\lambda$ follows from \eqref{eq:two-map-degrees}, and the final inequality holds because $E(\lambda)>0$ for a nonhook partition.
\end{proof}

\section{Obstruction to splitting}

We now record explicitly why an \emph{ungraded} splitting is
incompatible with the degree calculation above.  The point is that,
although the maps in an arbitrary ungraded splitting need not in
general be homogeneous, in our situation the relevant ordinary
Hom-spaces are one-dimensional and thus are spanned by homogeneous maps.
Consequently every nonzero map in either Hom-space is automatically
homogeneous.

\begin{lemma}\label{lem:splitting}
Let $A$ be a graded algebra, let $L$ be a nonzero graded $A$-module,
and let $M$ be a finite-dimensional graded $A$-module.  Suppose that,
after forgetting the grading,
\[
\Hom_A(L,M)=\F i,
\qquad
\Hom_A(M,L)=\F p,
\]
where
\[
i:L\longrightarrow M,
\qquad
p:M\longrightarrow L
\]
are nonzero homogeneous maps of degrees $d_i$ and $d_p$,
respectively.  If the underlying ungraded module $M$ has a direct
summand isomorphic to the underlying module $L$, then
\[
d_i+d_p=0.
\]
\end{lemma}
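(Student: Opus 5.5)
The plan is to turn the ungraded splitting into a statement about the composite $p\circ i$, and then compare degrees using the identity map of $L$.

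First I would use the splitting to produce ungraded maps. Suppose the underlying module $M$ has a direct summand isomorphic to $L$. Then there are ungraded $A$-module homomorphisms $j:L\to M$ and $q:M\to L$ with
\[
q\circ j=\mathrm{id}_L .
\]
By hypothesis $\Hom_A(L,M)=\F i$ and $\Hom_A(M,L)=\F p$. So $j=\alpha i$ and $q=\beta p$ for some scalars $\alpha,\beta\in\F$. This gives
\[
\alpha\beta\,(p\circ i)=\mathrm{id}_L .
\]
Since $L\neq 0$, the identity map is nonzero. Hence $\alpha\beta\neq0$, and $p\circ i=(\alpha\beta)^{-1}\mathrm{id}_L$ is a nonzero scalar multiple of the identity.

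Next I would record the degree of the composite. Because $i$ and $p$ are homogeneous of degrees $d_i$ and $d_p$, the composite $p\circ i$ is a homogeneous endomorphism of $L$ of degree $d_i+d_p$. That is, it maps $L_r$ into $L_{r+d_i+d_p}$ for every $r$.

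Finally I would compare this with the identity, which preserves each graded piece. Since $L$ is a nonzero graded module, there is a nonzero homogeneous vector $v\in L_r$ for some $r$. Then
\[
v=\alpha\beta\,(p\circ i)(v)\in L_{r+d_i+d_p}.
\]
Since $L=\bigoplus_s L_s$ is a direct sum and $v\in L_r$ is nonzero, this forces $r+d_i+d_p=r$, that is, $d_i+d_p=0$.

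No step here is a real obstacle. The only point needing care is that the maps in the ungraded splitting need not be homogeneous a priori. This is handled by the one-dimensionality of the Hom-spaces, which forces $j$ and $q$ to be scalar multiples of the homogeneous maps $i$ and $p$. Finite-dimensionality of $M$ is not needed beyond what is already built into the hypotheses.
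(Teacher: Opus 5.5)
Your proposal is correct and follows essentially the same route as the paper: you write the splitting as $q\circ j=\mathrm{id}_L$, use one-dimensionality of the Hom-spaces to get $j=\alpha i$ and $q=\beta p$, and then compare the degree $d_i+d_p$ of $p\circ i$ with the degree $0$ of the nonzero map $\mathrm{id}_L$. Your step of choosing a nonzero homogeneous vector $v\in L_r$ just makes explicit the degree comparison that the paper states in one line.
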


\begin{proof}
Suppose that the underlying ungraded module $L$ is a direct summand
of the underlying ungraded module $M$.  Then there are ordinary
$A$-module homomorphisms
\[
j:L\longrightarrow M,
\qquad
q:M\longrightarrow L
\]
such that
\[
qj=\operatorname{id}_L.
\]
In particular, both $j$ and $q$ are nonzero.  Since the two ordinary
Hom-spaces are one-dimensional, there exist nonzero scalars
$\alpha,\beta\in\F$ such that
\[
j=\alpha i,
\qquad
q=\beta p.
\]
Thus the actual splitting maps $j$ and $q$ are automatically
homogeneous, of degrees $d_i$ and $d_p$, respectively.  Moreover,
\[
\operatorname{id}_L=qj=\alpha\beta\,pi.
\]
Hence $pi$ is a nonzero scalar multiple of $\operatorname{id}_L$.  The map $pi$ is
homogeneous of degree $d_i+d_p$, whereas $\operatorname{id}_L$ is homogeneous of
degree $0$.  Since $\operatorname{id}_L\ne0$, these degrees must agree.  Therefore
\[
d_i+d_p=0,
\]
as required.
\end{proof}
\begin{proof}[Proof of Theorem~\ref{thm:main}]
Suppose, for contradiction, that the classical Specht module $\JSp^\lambda$ has a one-dimensional direct summand.  The underlying ungraded module of the graded column Specht module $\CS_\lambda$ is $\JSp^\lambda$, so $\CS_\lambda$ has, after forgetting the grading, a direct summand isomorphic to $L$.  In particular both Hom-spaces in \eqref{eq:two-hom-spaces} are nonzero.

Proposition~\ref{prop:two-degrees} shows that both Hom-spaces are one-dimensional and are spanned by the homogeneous maps $i_\lambda$ and $p_\lambda$.  Proposition~\ref{prop:degree-formula} gives
\[
\deg i_\lambda+\deg p_\lambda
=-\left\lceil\frac{E(\lambda)}2\right\rceil<0,
\]
because $\lambda$ is nonhook.  In fact, since $L$ is concentrated in degree $0$, the homogeneous endomorphism $p_\lambda i_\lambda$ of $L$ must already be zero.  More generally, Lemma~\ref{lem:splitting} says that an ungraded splitting would force the sum of the two degrees to be zero.  This contradiction proves the theorem.
\end{proof}

This means that Murphy's examples are the only Specht modules with a trivial direct summand:

\begin{corollary}\label{cor:classification}
Let $\F$ be a field of characteristic $2$, let $\lambda\vdash n$, and assume $\dim\JSp^\lambda>1$.  Then $\JSp^\lambda$ has a one-dimensional direct summand if and only if
\[
\lambda=(n-r,1^r)
\]
is a hook satisfying
\[
n\text{ is odd},\qquad r\text{ is even},\qquad
\binom{n-1}{r}\text{ is odd}.
\]
\end{corollary}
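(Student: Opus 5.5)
The plan is to split the corollary into its two directions along the hook/nonhook dichotomy, using Theorem~\ref{thm:main} to dispose of nonhooks entirely and Murphy's theorem \cite{Murphy} for hooks.

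First, suppose $\JSp^\lambda$ has a one-dimensional direct summand and $\dim\JSp^\lambda>1$. By Theorem~\ref{thm:main}, $\lambda$ must be a hook, say $\lambda=(n-r,1^r)$. The hypothesis $\dim\JSp^\lambda>1$ excludes $\lambda=(n)$ and $\lambda=(1^n)$, so $1\le r\le n-2$ and the module is a genuine hook Specht module of dimension $\binom{n-1}{r}>1$. This is precisely the situation covered by Murphy's classification. The conditions listed in the corollary ($n$ odd, $r$ even, $\binom{n-1}{r}$ odd) are then read off from Murphy's theorem, and the converse direction is likewise Murphy's theorem applied to such a hook.

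The only real work is matching conventions. Murphy's statement must be translated into James's convention for $\JSp^\lambda$ with $\lambda=(n-r,1^r)$. Since trivial equals sign in characteristic $2$, and since $\JSp^{\lambda'}\cong(\JSp^\lambda)^*$ by \cite[Theorem~8.15]{James}, a one-dimensional summand of $\JSp^\lambda$ exists if and only if one exists for $\JSp^{\lambda'}$. This guarantees that the criterion is insensitive to conjugation. That is consistent with the stated conditions, because $n$ odd and $r$ even imply $n-1-r$ is even, and $\binom{n-1}{r}=\binom{n-1}{n-1-r}$.

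As a sanity check on the parity conditions, I would compare them with Hudak's necessary conditions in Proposition~\ref{prop:hudak}. For $\mu=\lambda$, every row of the hook except the last has length $1$, and the first row $n-r$ must be odd. For $\lambda'$, the first column has height $r+1$, which must be odd. When $n$ is odd these force $r$ even. The binomial condition comes from the nonvanishing of $p_\lambda i_\lambda$, which Murphy computes. The main obstacle is therefore purely bibliographic: faithfully restating Murphy's criterion in the present indexing, including the binomial parity condition. I would check it on small cases such as $(2,1)$ and $(3,1,1)$.
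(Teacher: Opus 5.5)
Your proposal is correct and is essentially the paper's proof: Theorem~\ref{thm:main} rules out every nonhook, and Murphy's theorem \cite[Theorem~5.5]{Murphy} gives the classification for the hooks $(n-r,1^r)$ with $1\le r\le n-2$. Your conjugation-invariance and Hudak-parity remarks are only consistency checks, with one slip in the second: it is the rows other than the \emph{first} that have length $1$, and the conditions $n-r$ odd and $r+1$ odd together force both $r$ even and $n$ odd.
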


\begin{proof}
Theorem~\ref{thm:main} rules out every nonhook.  Murphy's theorem \cite[Theorem~5.5]{Murphy} thus gives a complete classification.
\end{proof}
 
Although our proof does not use the method proposed by Collins--Dodge, their criterion can be read in reverse to give information about the expansion of the distinguished fixed vector in the standard-polytabloid basis of $\JSp^\lambda$:
\begin{remark}
\label{remark:Collins-Dodge condition}
Collins and Dodge define $\lambda$ to be \emph{Lucas perfect} if both $\lambda$ and $\lambda'$ satisfy James' condition, that is, if there are nonzero maps $\F\rightarrow \JSp^\lambda$ and $\JSp^\lambda\rightarrow \F$.  Suppose that $\lambda$ is a nonhook Lucas perfect partition, and let $M^\lambda$ be the permutation module with basis the set of $\lambda$-tabloids.  Let
\[
f_\lambda:=\sum_{\{t\}}\{t\},
\]
where the sum is over all $\lambda$-tabloids.  This spans the one-dimensional fixed-point space of $M^\lambda$.  When $f_\lambda$ is expressed in the basis of standard polytabloids of $\JSp^\lambda$, the sum of its coefficients is $0$ in $\F$.  See \cite[Theorem~3.5]{CollinsDodge} for details.
\end{remark}

\begin{remark}
The proof raises two natural questions.  First, Theorem~\ref{thm:main},
together with the Collins--Dodge criterion, implies a parity statement
for the coefficients of the fixed vector in the standard-polytabloid
basis.  It would be interesting to find a direct combinatorial
explanation for this parity.

Second, the grading obstruction vanishes precisely for hooks.  Thus
the argument isolates exactly the family in which a trivial summand
can occur, but does not by itself recover Murphy's classification
within that family.  It would be interesting to determine whether the
stronger congruence conditions in Hudak's theorem can be used to give
a graded proof of Murphy's hook theorem.
\end{remark}

\bibliographystyle{alpha}
\bibliography{references}

\end{document}